\numberwithin{equation}{section}
\theoremstyle{plain}
\newtheorem{theorem}{Theorem}[section]
\newtheorem{lemma}[theorem]{Lemma}
\newtheorem{corollary}[theorem]{Corollary}
\theoremstyle{definition}
\theoremstyle{remark}
\newtheorem{remark}[theorem]{Remark}
\newtheorem{case[theorem]}{Case}
\title[\parbox{14cm}{\centering{ Extension and averaging operators for finite fields \hspace{1in}}} \quad]{Extension and averaging operators for finite fields}
\author{ Doowon Koh and Chun-Yen Shen}
\address{Department of Mathematics\\
Chungbuk National University \\
Cheongju city, Chungbuk-Do 361-763  Korea}
\email{koh131@chungbuk.ac.kr}
\address{Department of Mathematics and Statistics\\
 McMaster University\\
Hamilton, L8S 4K1 Canada}
\email{shenc@umail.iu.edu}
\thanks{Key words and phrases:  Extension problem,   Averaging operator,  Finite field }
\subjclass[2000]{42B05; 11T24}
\begin{document}

\begin{abstract} In this paper we study  $L^p-L^r$ estimates of both the extension operator and the averaging operator associated with the algebraic variety $S=\{x\in {\mathbb F}_q^d: Q(x)=0\}$ where $Q(x)$ is a non-degenerate quadratic form over the finite field ${\mathbb F}_q$ with $q$ elements. We show that the Fourier decay estimate on $S$ is good enough to establish the sharp averaging estimates in odd dimensions.
In addition, the Fourier decay estimate enables us to simply extend the sharp $L^2-L^4$ conical extension result in $\mathbb F_q^3$, due to Mockenhaupt and Tao, to the  $L^2-L^{2(d+1)/(d-1)}$ estimate in all odd dimensions $d\geq 3.$  We also establish  sharp estimates except for endpoints  of the mapping properties of the average operators in the case that when the variety $S$  in even dimensions $d\geq 4$  contains a $d/2$-dimensional subspace.

\end{abstract}
     
 \maketitle    
\tableofcontents

\section{Introduction} 

 In the Euclidean setting the extension problem asks us to determine the optimal range of exponents $1\leq p,r \leq \infty $ such that the following estimate holds:
$$\|(gd\sigma)^\vee\|_{L^r({\mathbb R}^d)}\leq C(p,r,d)\|g\|_{L^p(S, d\sigma)} ~~\mbox{for all}~~ g\in L^p(S, d\sigma)$$
where $d\sigma$ is a measure on the set $S$ in ${\mathbb R}^d.$  In 1967, this problem was addressed by E.M.Stein and it has been extensively studied. In particular, much attention  has been given in the case where the set $S$ is related to a hypersurface. However, this problem has not been completely solved in higher dimensions.  For a comprehensive survey of the extension problem, see \cite{Bo91}, \cite{St93}, \cite{CI98}, and \cite{Ta04} and the references therein. \\

Another interesting problem in classical harmonic analysis is the averaging problem which is to determine  the optimal range of exponents $1\leq p,r \leq \infty$ such that the following averaging estimate holds:
\begin{equation}\label{Averagingquestion} 
\|f\ast d\sigma\|_{L^r({\mathbb R}^d)}\leq C(p,r,d) \|f\|_{L^p({\mathbb R}^d)}~~\mbox{for all}~~f\in L^p({\mathbb R}^d),\end{equation}
where $d\sigma$ is a measure on a surface $S$ in ${\mathbb R}^d.$ 
This problem is originally from investigating the regularity of the fundamental solution of a wave equation at a fixed time, and many interesting results on the problem have been obtained (e.g. see \cite{St70}, \cite{St71}, \cite{Li73}, and \cite{IS96}). \\

In the finite field setting,   the extension problem and the averaging problem were recently introduced by Mockenhaupt and Tao (\cite {MT04}) and Carbery, Stones and Wright (\cite{CSW08}) respectively. In this paper we aim to develop their work by studying the topics related to an algebraic variety $$S=\{x\in \mathbb F_q^d: Q(x)=0\},$$ where $Q$ denotes a non-degenerate quadratic polynomial and $\mathbb F_q^d$ denotes the $d$-dimensional vector space over a finite field $ \mathbb F_q$ with $q$ elements. In \cite{MT04}, Mockenhaupt and Tao defined  the cone for finite fields as 
$$C_d=\{x\in \mathbb F_q^d: x_dx_{d-1}=x_1^2+\cdots+x_{d-2}^2\},$$
which is a specific form of the variety $S$. Using combinatorial  arguments, they proved that $L^2-L^4$ extension estimate holds and it actually implies the complete answer to the extension problem for the cone $C_3$ in $\mathbb F_q^3$ (see \cite{MT04}). In this paper we shall observe that the extension operator for the variety $S$ yields $L^2-L^{(2d+2)/(d-1)}$ extension estimate for all odd dimensions $d\geq 3,$ but it is not necessarily true for even dimensions $d\geq 4.$
Notice that this result recovers the sharp extension result on the cone $C_3\subset \mathbb F_q^3,$ and gives non-trivial results in higher odd dimensions. We shall also investigate  $L^p-L^r$ estimates of the averaging operator over the variety $S$ in even dimensions.

\subsection{Notation and Definition}

In order to clearly state our main results we begin by recalling some notation and definitions.  
We denote by ${\mathbb F}_q$ a finite field with $q$ elements and assume that the characteristic of ${\mathbb F}_q$ is greater than $2$,
namely $q$ is a power of an odd prime. As usual, ${\mathbb F}_q^d$ refers to the $d$-dimensional vector space over a finite field ${\mathbb F}_q$. 
Let $g: {\mathbb F}_q^d\to {\mathbb C}$ be a complex valued function on ${\mathbb F}_q^d.$ We endow the space ${\mathbb F}_q^d$ with a counting measure
$dm.$ Thus, the integral of the function $g$ over $({\mathbb F}_q^d, dm)$ is given by 
$$\int_{{\mathbb F}_q^d} g(m)~dm = \sum_{m\in {\mathbb F_{q}^d}} g(m).$$
For a fixed non-trivial additive character $\chi: {\mathbb F}_q \to {\mathbb C}$ and a complex valued function $g$ on $({\mathbb F_q^d},dm)$, we define the Fourier transform of $g$ by the following formula
\begin{equation}\label{defofrF} \widehat{g}(x)= \int_{{\mathbb F_q^d}} \chi(-m\cdot x)g(m)~dm= \sum_{m\in {\mathbb F_q^d}} \chi(-m\cdot x) g(m),\end{equation}
where $ x$ is an element in the dual space of $({\mathbb F_{q}^d},dm).$ Recall that the Fourier transform of the function $g$ on $({\mathbb F_q^d},dm)$ is actually defined on the dual space $({\mathbb F_{q}^d},dx).$ Here, we endow the dual space $({\mathbb F_{q}^d},dx)$ with a normalized counting measure $dx.$
We therefore see that if $f: ({\mathbb F_{q}^d},dx) \to {\mathbb C},$ then its integral over $({\mathbb F_{q}^d},dx) $ is given by
$$ \int_{{\mathbb F_{q}^d}} f(x)~dx= \frac{1}{q^d}\sum_{x \in {\mathbb F_{q}^d}} f(x)$$ and  the Fourier transform of the function $f$ defined on $({\mathbb F_{q}^d}, dx)$ is given by the formula
\begin{equation}\label{realFourier} \widehat{f}(m)=\int_{{\mathbb F_{q}^d}} \chi(-x\cdot m) f(x)~dx= \frac{1}{q^d} \sum_{x\in {\mathbb F_{q}^d}} \chi(x\cdot m) f(x),\end{equation}
where we recall that $m$ is any element in $({\mathbb F}_q^d,dm)$ with the counting measure $``dm",$ and we denote by $``dx"$ the normalized counting measure on $(\mathbb F_q^d, dx)$. We also recall that the Fourier inversion theorem holds:  for 
$x\in ({\mathbb F_{q}^d},dx)$ 
$$f(x)=\int_{m\in {\mathbb F_q^d}} \chi(m\cdot x) \widehat{f}(m)dm=\sum_{m\in {\mathbb F_q^d}} \chi(m\cdot x) \widehat{f}(m).$$
Using the orthogonality relation of the non-trivial additive character, that is $ \sum_{x\in \mathbb F_q^d} \chi(m\cdot x)=0$ for $m\in \mathbb F_q^d\setminus \{(0,\dots,0)\}$, we see that  the Plancherel theorem holds:
$$ \|\widehat{f}\|_{L^2({\mathbb F}_q^d, dm)}=\|f\|_{L^2({\mathbb F_{q}^d},dx)}.$$
In other words, the Plancherel theorem  yields the following formula:
\begin{equation}\label{plancherel} \sum_{m\in {\mathbb F_q^d}} |\widehat{f}(m)|^2= \frac{1}{q^d}\sum_{x\in {\mathbb F_{q}^d}} |f(x)|^2.\end{equation}
Let $f$ and $h$ be a complex valued function defined on $({\mathbb F_{q}^d},dx).$ The convolution function $f\ast h$ is defined on the space $(\mathbb F_q^d, dx)$ and it follows the rule:
$$ f\ast h (y)= \int_{x\in \mathbb F_q^d} f(y-x) h(x) dx = \frac{1}{q^d}\sum_{x\in \mathbb F_q^d} f(y-x)h(x).$$ 
It is not hard to see 
$$ \widehat{(f\ast h)}(m)= \widehat{f}(m)\cdot \widehat{h}(m) ~~\mbox{and} ~~ \widehat{(f\cdot h)}(m)= (\widehat{f}\ast \widehat{h}) (m).$$
\begin{remark} Throughout the paper we always consider the variable $``m"$ as an element of $({\mathbb F_q^d},dm)$ with the counting measure $``dm".$ On the other hand, we always use the variable $``x$ or $ y"$ to indicate  an element of $({\mathbb F_{q}^d},dx)$ with the normalized counting measure $``dx".$
Notice  from (\ref{defofrF}) and (\ref{realFourier}) that the definition of the Fourier transforms takes two different forms which depend on the domain of the Fourier transforms. 
\end{remark}
We now introduce the algebraic variety $S$ in $({\mathbb F_{q}^d}, dx)$ on which we shall work.
Given a non-degenerate quadratic polynomial $Q(x)\in \mathbb F_q[x_1,\dots,x_d]$,   we define an algebraic variety $S$ in $({\mathbb F_{q}^d},dx)$ by the set
\begin{equation}\label{defofSr} S=\{x\in {\mathbb F_{q}^d}: Q(x)=0\}.\end{equation}
By a non-singular linear substitution, any non-degenerate quadratic polynomial $Q(x)$ can be transformed into $a_1x_1^2+\cdots+a_dx_d^2 $ for some $a_j\in \mathbb F_q \setminus \{0\}, j=1,\dots,d$ (see the page 280  in \cite{LN97}). Hence, we may express the set $S$ as follows:
\begin{equation}\label{defofS}
S=\{x\in {\mathbb F_{q}^d}: a_1x_1^2+a_2x_2^2+\cdots+ a_dx_d^2=0 \}\subset ({\mathbb F_q^d},dx).
\end{equation}
We endow the set $S$ with a normalized surface measure $d\sigma$ which is given by the relation
$$ \int_{S} f(x) ~d\sigma(x) = \frac{1}{|S|}\sum_{x\in S} f(x),$$
where $|S|$ denotes the cardinality of $S.$ Note that the total mass of $S$ is one and the measure $\sigma$ is just a function on $({\mathbb F_{q}^d},dx)$ given by 
\begin{equation}\label{kohmeasure} \sigma(x)= \frac{q^d}{|S|} S(x),\end{equation}
here, and throughout the paper, we identify the set $S$ with the characteristic function on the set $S.$ For example, we write $E(x)$ for $\chi_{E}(x)$
where $E$ is a subset of ${\mathbb F_{q}^d}.$

\subsection{ Definition of extension and averaging problems for finite fields}
We recall the definition of the extension problem related to the algebraic variety $S$ in $({\mathbb F_q^d},dx).$ For $1\leq p, r \leq \infty,$ we denote by $R^*(p\to r)$ the smallest constant such that the following extension estimate holds:
$$ \|(fd\sigma)^\vee\|_{L^r({\mathbb F_q^d}, dm)} \leq R^*(p\to r) \|f\|_{L^p(S,d\sigma)}$$
for every function $f$ defined on $S$ in $({\mathbb F_{q}^d},dx).$ By duality, we  see that the quantity $R^*(p\to r)$ is also the smallest constant such that the following restriction estimate holds: for every function $g$ on $({\mathbb F_q^d},dm),$ 
\begin{equation}\label{restrictiondef}
\|\widehat{g}\|_{L^{p'}(S,d\sigma)}\leq R^*(p\to r) \|g\|_{L^{r'}({\mathbb F_q^d}, dm)},
\end{equation}
here, throughout the paper, $p'$ and $r'$ denote the dual exponents of $p$ and $r$ respectively. In other words,  $1/p+1/p'=1$ and $1/r+1/r'=1.$
The constant $R^*(p\to r)$ may depend on $q,$ the size of the underlying finite field 
${\mathbb F_{q}}.$ However, the extension problem asks to determine the exponents $1\leq p,r\leq \infty$ such that $R^*(p\to r)\lesssim 1$ where the constant in the notation $\lesssim$ is independent of $q$ and $f$. We recall that for positive numbers $A$ and $B$, the notation $A\lesssim B$ means that there exists a constant $C>0$ independent of the parameter $q$ and $f$ such that $A\leq C B.$ 
We also use the notation $A\sim B$  to illustrate that there exist $C_1>0$ and $C_2>0$ such that $ C_1 A\leq B\leq C_2A.$ 

\begin{remark}\label{endpoint}A direct calculation yields the trivial estimate, $R^*(1\to \infty)\lesssim 1.$
Using H\"{o}lder's inequality and the nesting properties of $L^p$-norms, we also see that
$$ R^*(p_1\to r)\leq R^*(p_2\to r)\quad\mbox{for}~~1\leq p_2\leq p_1\leq \infty$$
and 
$$ R^*(p\to r_1)\leq R^*(p\to r_2)\quad \mbox{for}~~ 1\leq r_2\leq r_1\leq \infty.$$
Therefore,  the optimal result  could be obtained once we find the smallest $r$ and the largest $p$ such that  $R^*(p\to r) \lesssim 1.$
\end{remark}

We now introduce the averaging problem over the algebraic variety $S$ in $({\mathbb F_q^d},dx).$ We denote by $A(p\to r)$ the smallest constant such that the following averaging estimate holds: for every $f$ defined on $({\mathbb F_{q}^d},dx)$, we have 
$$ \|f\ast d\sigma\|_{L^r({\mathbb F_{q}^d, dx})} \leq A(p\to r) \|f\|_{L^p({\mathbb F_{q}^d, dx})},$$
where $d\sigma$ is the normalized surface measure on $S$ defined as in (\ref{kohmeasure}). Like the extension problem,  the averaging problem asks to determine the  exponents $1\leq p,r \leq \infty$ such that $ A(p\to r)\lesssim 1.$ 

\section{Statement of main results}
\subsection{Results on extension problems}

As mentioned before,  Mockenhaupt and Tao (\cite{MT04})  proved that $L^2-L^4$  estimate implies the complete solution to the extension problem related to the cone in $\mathbb F_q^3.$
Using simple arguments, we modestly extends their result to higher dimensions.

\begin{theorem}\label{oddresult} Let $S$ be the variety  defined as in (\ref{defofSr}) or (\ref{defofS}). If $d\geq 3$ is odd, then we have
\begin{equation}\label{general1}R^*\left(2\to \frac{2d+2}{d-1}\right)\lesssim 1,\end{equation}
 and if $d\geq 4$ is even, then 
\begin{equation} \label{sharp22} R^*\left(2\to \frac{2d}{d-2}\right)\lesssim 1.\end{equation}
In addition, there exist specific varieties $S$ for which each result of (\ref{general1}) and (\ref{sharp22}) gives  a sharp $L^2-L^r$ extension estimate.
\end{theorem}
\begin{remark} \label{remarkgood} We shall see that in fact Theorem \ref{oddresult} is a direct result from the well-known standard Tomas-Stein type argument.
However, the conclusions of Theorem \ref{oddresult} are very interesting, in part because they are  inconsistent with the facts in the Euclidean case. For example, if $S\subset \mathbb R^d$ is a compact subset of the cone, then it is well known that the $L^2-L^{2d/(d-2)}$ estimate gives the sharp $L^2-L^r$ extension estimate for all dimension $d\geq 3$ (see \cite{Ta04} or \cite{Wo01}).  Notice that  the conclusion (\ref{general1}) is much better than that in the Euclidean case although the conclusion (\ref{sharp22}) in even dimensions is exactly the same.
In the Euclidean setting, the curvature on the surface makes an important role in determining the extension estimates. 
On the other hand, the extension estimates for finite fields can be determined in accordance with the maximal size of affine subspaces in the surface $S.$ This explains why the result (\ref{general1}) for odd dimensions is much better than (\ref{sharp22}) for even dimensions. In fact, the surface $S$ in even dimensions may contain a $d/2$-dimensional subspace but this never happens in odd dimensions, because $d/2$ is not an integer for odd $d$. The conclusion (\ref{general1}) shows that  if $d\geq 3$ is odd, then $q^{(d-1)/2}$  is the maximal  cardinality of  subspaces contained in the variety $S.$ 

\end{remark}

\subsection{Results on averaging problems}
\begin{theorem}\label{mainpaper} Let $S$ be the algebraic variety in $({\mathbb F_{q}^d},dx)$ defined as in (\ref{defofSr}) or (\ref{defofS}). If $d\geq 3$ is odd, then we have
\begin{equation}\label{sharpodda} A(p\to r)\lesssim 1 ~~\iff ~~ \left(\frac{1}{p}, \frac{1}{r}\right) \in {\mathbb T},\end{equation}
where ${\mathbb T}$ denotes the convex hull of points $(0,0), (0,1),(1,1)$ and $(d/(d+1), 1/(d+1)).$
On the other hand, if $d\geq 4$ is even, then
\begin{equation}\label{sharpupto}A(p\to r)\lesssim  1 \quad \mbox{for}~~ \left(\frac{1}{p}, \frac{1}{r}\right) \in \Omega\setminus\{P_1,P_2\} \end{equation}
where $\Omega$ denotes  the convex hull of points $(0,0), (0,1), (1,1),$
$$P_1= \left(\frac{d^2-2d+2}{d(d-1)}, \frac{1}{(d-1)}  \right)~~\mbox{and}~~ P_2=\left(\frac{d-2}{d-1}, \frac{d-2}{d(d-1)}  \right).$$
In addition, if $d\geq 4$ is even and $P_1=(1/p, 1/r)$ then the restricted strong-type estimate 
\begin{equation}\label{sharpupto1} \|f\ast d\sigma\|_{L^r(\mathbb F_q^d, dx)}\lesssim \|f\|_{L^{p,1}(\mathbb F_q^d, dx)}\end{equation}
holds, and if $d\geq 4$ is even and $P_2=(1/r^\prime, 1/p^\prime)$ then the weak-type estimate 
\begin{equation}\label{sharpupto2} \|f\ast d\sigma\|_{L^{p^\prime, \infty}(\mathbb F_q^d, dx)} \lesssim \|f\|_{L^{r^\prime}(\mathbb F_q^d, dx)}\end{equation}
holds.
Finally,  the averaging results in even dimensions  are sharp in the sense that  if $(1/p, 1/r) \notin \Omega$ and $S$ contains a $d/2$-dimensional subspace, then  $L^p-L^r$ averaging estimate is impossible. 
\end{theorem}

The results in  Theorem \ref{mainpaper} are also interesting since it contrasts with well-known facts in the Euclidean case.
In the Euclidean space it is well known that  if a hypersurface has everywhere non-vanishing Gaussian curvature , then  $L^p-L^r$ averaging estimate holds if and only if $(1/p, 1/r)$ lies in the triangle with vertices $(0,0), (1,1), $ and $(d/(d+1), 1/(d+1) ).$\footnote{ If $1\leq r<p\leq \infty,$ then the $L^p-L^r$ averaging estimate is impossible in the Euclidean case but it always holds in the finite field setting. Therefore, it would be only interesting to find  the difference   in the case when $1\leq p\leq r\leq \infty$.}
 However, if the Gaussian curvature is allowed to vanish, then the averaging estimates are getting worse (see \cite{Li73}, \cite{St70}, and \cite{St71}) .  For example, since it is clear that $S=\{x\in\mathbb R^d: x_1^2+x_2^2+ \dots + x_d^2=0\}$ has everywhere  vanishing Gaussian curvature away from the origin, the averaging estimates in the Euclidean case must be much worse than our result (\ref{sharpodda}) in the finite field case.  The other interesting point  of Theorem \ref{mainpaper} says that  the sharp averaging estimates (\ref{sharpodda}) in odd dimensions are better than those in even dimensions.  The main reason for the difference is the same as what  has been mentioned  in Remark \ref{remarkgood}.
Since the variety $S$ in odd dimensions $d\geq 3$ can only contain  a subspace $H$ with  the cardinality  at most $q^{(d-1)/2}$. We shall see  that the Fourier transform of the surface measure $d\sigma$ yields a good decay estimate that  the sharp averaging estimates (\ref{sharpodda}) can be directly obtained by the well-established Euclidean arguments. On the other hand, if  the dimension $d\geq 4$ is even, then a relatively big subspace $H$ with the cardinality $q^{d/2}$ may lie in $S.$ In this case, the averaging problem becomes much harder but we still can obtain relatively good results by applying our extension result (\ref{sharp22}).

\subsection{Outline of the remaining parts of the paper}
In Section \ref{three}, we summarize the necessary conditions for $R^*(p\to r)\lesssim 1$ and $A(p\to r)\lesssim 1.$  In Section \ref{four}, we compute the explicit form of the Fourier transform on the variety $S,$ which makes a crucial role in the proofs of our results. In Section \ref{five}, the proof of Theorem \ref{oddresult} is given. In the last section, we complete the proof of Theorem \ref{mainpaper}.

 \section{Necessary conditions for $L^p-L^r$ extension and averaging estimates}\label{three}
 In this section, we review the necessary conditions for $R^*(p\to r)\lesssim 1$ and $A(p\to r)\lesssim 1.$
 Mockenhaupt and Tao (\cite{MT04}) introduced the necessary conditions for $L^p-L^r$ extension estimates related to the cone 
 $C_3=\{x\in \mathbb F_q^3: x_2x_3=x_1^2\}$ and they proved that the necessary conditions are in fact sufficient. Based on the similar arguments as in \cite {MT04}, it is not hard to find the necessary conditions for the case of higher dimensions. Here, we state the necessary conditions for $L^p-L^r$ extension estimates related to the variety $S=\{x\in \mathbb F_q^d: a_1x_1^2+\dots+a_dx_d^2=0\}, d\geq 3,$  and we leave the proof to the readers.
 \begin{lemma}\label{Necone} If $d\geq 4$ is even and $S$ contains a $d/2-$dimensional subspace, then the necessary conditions for $R^*(p\to r)\lesssim 1$ take the followings:
 $$ r\geq \frac{2d-2}{d-2} \quad \mbox{and}\quad r\geq \frac{dp}{(d-2)(p-1)}.$$
 On the other hand, if $d\geq 3$ is odd, $S$ contains a $(d-1)/2-$dimensional subspace, and $-a_ia_j^{-1}$ is a square number for some $i,j=1,2,\cdots, d$ with $i\neq j$, then the necessary conditions for $R^*(p\to r)\lesssim 1$ are given by the relation:
 $$ r\geq \frac{2d-2}{d-2} \quad \mbox{and}\quad r\geq \frac{(d+1)p}{(d-1)(p-1)}.$$\end{lemma}
 
 The necessary conditions for $L^p-L^r$ averaging estimates are well-known by Carbery, Stones and Wright (\cite{CSW08}). In our case, the necessary conditions can be stated as follows:
 \begin{lemma}\label{neave}
 For  $a_j\neq 0, j=1,\dots,d,$ let $S=\{x\in {\mathbb F_{q}^d}: a_1x_1^2+ \cdots+a_dx_d^2=0\}.$
Then $A(p\to r)\lesssim 1$ only if $(1/p, 1/r)$ lies in the convex hull of the points 
\begin{equation}\label{N4}(0,0),(0,1),(1,1), ~\mbox{and}~ \left(\frac{d}{d+1}, \frac{1}{d+1}\right).\end{equation}
Moreover, if $d\geq 4$ is even and $S$ contains a $d/2-$dimensional affine subspace $H$, then $A(p\to r)\lesssim 1$ only if $(1/p, 1/r)$ lies in the convex hull of the points $(0,0),(0,1),(1,1),$ 
\begin{equation}\label{N5} \left( \frac{d^2-2d+2}{d(d-1)}, \frac{1}{d-1}\right), ~\mbox{and}~\left(\frac{d-2}{d-1},\frac{d-2}{d(d-1)} \right).\end{equation}
 \end{lemma}
 
 \section{The Fourier transform of the surface measure $d\sigma$}\label{four}
 In this section we obtain an explicit formula for the Fourier transform of the surface measure $d\sigma$ on the surface $S$ defined as in (\ref{defofS}). We shall see that the Fourier transform is closely related to the classical Gauss sums. Moreover, it makes a key role to prove our main results on both the extension problem and  the averaging problem. It is useful to review classical Gauss sums in the finite field setting.
In the remainder of this paper, we fix the additive character $\chi$ as a canonical additive character of ${\mathbb F_q}$ and $\eta$ always denotes  the quadratic character of ${\mathbb F_q}.$ Recall that $\eta(t)=1$ if $s$ is a square number in ${\mathbb F}_q\setminus \{0\}$ and $\eta(t)=-1$ if $t$ is not a square number in ${\mathbb F_q}\setminus \{0\}.$  We also recall that $\eta(0)=0, \eta^2\equiv 1, \eta(ab)=\eta(a)\eta(b)$ for $a,b\in {\mathbb F_q},$ and $\eta(t)=\eta(t^{-1})$ for $t\neq 0.$ For each $t\in {\mathbb F_q},$ the Gauss sum $G_t(\eta,\chi)$ is defined by 
$$ G_t(\eta,\chi)=\sum_{s\in {\mathbb F_q}\setminus \{0\}} \eta(s)\chi(ts).$$
The absolute value of the Gauss sum is given by the relation
$$|G_t(\eta,\chi)|=\left\{\begin{array}{ll} q^{\frac{1}{2}} \quad &\mbox{if}~~t\neq 0\\
0\quad&\mbox{if} ~~t=0.\end{array}\right.$$
In addition, we have the following formula 
\begin{equation}\label{complete}\sum_{s\in {\mathbb F_q}} \chi(ts^2)=\eta(t)G_1(\eta,\chi)~~\mbox{for any}~~ t\neq 0.\end{equation}
 For the nice proofs for the properties related to the Gauss sums, see 
Chapter $5$ in \cite{LN97} and Chapter $11$ in \cite{IKow04}. When we complete the square and apply a change of variable,  the formula (\ref{complete}) yields the following equation: for each $a\in {\mathbb F_q}\setminus \{0\}, b \in {\mathbb F_q}$
\begin{equation}\label{method} \sum_{s\in {\mathbb F_q}} \chi(as^2+bs)= G_1(\eta,\chi) \eta(a)\chi\left(\frac{b^2}{-4a}\right).\end{equation}
We shall name the skill used to obtain the formula (\ref{method}) as the complete square method. 
Relating the inverse Fourier transform of $d\sigma$ with the Gauss sum, we shall obtain an explicit form of $(d\sigma)^\vee$, the inverse Fourier transform of the surface measure on $S.$ We have the following lemma.
\begin{lemma}\label{explicit}
Let $d\sigma$ be the surface measure on $S$ defined as in (\ref{defofS}). If $d\geq 3$ is odd, then we have

$$ (d\sigma)^\vee(m)=\left\{\begin{array}{ll} q^{d-1} |S|^{-1} \quad &\mbox{if}~~ m=(0,\dots,0)\\
0 \quad &\mbox{if}~~ m\neq (0,\dots,0),~ \frac{m_1^2}{a_1}+\cdots+ \frac{m_d^2}{a_d}=0\\
 \frac{G_1^{d+1}}{q|S|} \eta(-a_1 \cdots  a_d) \eta\left(\frac{m_1^2}{a_1}+\cdots+\frac{m_d^2}{a_d}\right) \quad &\mbox{if} ~~ \frac{m_1^2}{a_1}+\cdots+ \frac{m_d^2}{a_d}\neq 0. \end{array}\right.$$
 If $d\geq 2$ is even, then we have
 $$ (d\sigma)^\vee(m)=\left\{ \begin{array}{ll} q^{d-1} |S|^{-1}+ \frac{G_1^d}{|S|} (1-q^{-1})\eta(a_1\cdots a_d) \quad &\mbox{if}~~ m=(0,\dots,0)\\
\frac{G_1^d}{|S|}(1-q^{-1})\eta(a_1\cdots a_d)  \quad &\mbox{if}~~ m\neq (0,\dots,0),~ \frac{m_1^2}{a_1}+\cdots+ \frac{m_d^2}{a_d}=0\\
 -\frac{G_1^{d}}{q|S|} \eta(a_1 \cdots  a_d)  \quad &\mbox{if} ~~ \frac{m_1^2}{a_1}+\cdots+ \frac{m_d^2}{a_d}\neq 0, \end{array}\right.$$
 here, and throughout this paper, we write $G_1$ for the Gauss sum $G_1(\eta,\xi)$ and $\eta$ denotes the quadratic character of ${\mathbb F_q}.$
\end{lemma}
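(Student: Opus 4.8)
The plan is to evaluate $(d\sigma)^\vee(m)$ directly from the definition and reduce the whole computation to a single complete exponential sum which the complete square method $(\ref{method})$ handles in closed form; the parity of $d$ enters only at the very last step.

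First I would note that, since $\sigma(x)=q^d|S|^{-1}S(x)$ and $dx$ is the normalized counting measure, one has $(d\sigma)^\vee(m)=\frac{1}{|S|}\sum_{x\in S}\chi(m\cdot x)$. To encode the defining relation $a_1x_1^2+\cdots+a_dx_d^2=0$ of $S$ I would insert the orthogonality identity $S(x)=q^{-1}\sum_{t\in{\mathbb F}_q}\chi\big(t(a_1x_1^2+\cdots+a_dx_d^2)\big)$ and interchange the order of summation, obtaining
$$(d\sigma)^\vee(m)=\frac{1}{q|S|}\sum_{t\in{\mathbb F}_q}\ \sum_{x\in{\mathbb F}_q^d}\chi\Big(\sum_{j=1}^{d}\big(ta_jx_j^2+m_jx_j\big)\Big).$$
The term $t=0$ contributes $q^{d}$ when $m=(0,\dots,0)$ and $0$ otherwise. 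For each fixed $t\neq0$ the inner sum over $x\in{\mathbb F}_q^d$ factors over the coordinates, and applying $(\ref{method})$ coordinatewise shows it equals
$$G_1^{\,d}\,\eta(t)^{d}\,\eta(a_1\cdots a_d)\,\chi\Big(\frac{-1}{4t}\Big(\frac{m_1^2}{a_1}+\cdots+\frac{m_d^2}{a_d}\Big)\Big),$$
using $\eta(ta_j)=\eta(t)\eta(a_j)$ and $\eta(4)=1$.

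Writing $\lambda=m_1^2/a_1+\cdots+m_d^2/a_d$, it then remains to evaluate $\sum_{t\neq0}\eta(t)^{d}\chi(-\lambda/(4t))$, and this is precisely where the even/odd dichotomy surfaces. If $d$ is odd then $\eta(t)^{d}=\eta(t)$; the substitution $t\mapsto t^{-1}$, which preserves $\eta(t)$, turns the sum into $\sum_{t\neq0}\eta(t)\chi(-\lambda t/4)$, which vanishes when $\lambda=0$ by orthogonality of $\eta$ and equals $\eta(-\lambda/4)\,G_1=\eta(-1)\eta(\lambda)\,G_1$ when $\lambda\neq0$, using $G_c(\eta,\chi)=\eta(c)G_1$. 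If $d$ is even then $\eta(t)^{d}=1$ for every $t\neq0$, so the sum equals $q-1$ when $\lambda=0$, and when $\lambda\neq0$ the substitution $t\mapsto -\lambda/(4t)$ identifies it with $\sum_{u\neq0}\chi(u)=-1$. Reinstating the $t=0$ term, dividing by $q|S|$, and simplifying the characters (notably $\eta(-1)\eta(a_1\cdots a_d)=\eta(-a_1\cdots a_d)$) produces exactly the three cases in the statement for both parities; as a byproduct, taking $m=0$ in the odd case gives $(d\sigma)^\vee(0)=q^{d-1}|S|^{-1}$, which together with the trivial value $(d\sigma)^\vee(0)=1$ reproves $|S|=q^{d-1}$ in odd dimensions and makes the stated normalization transparent.

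I do not anticipate a genuine obstacle; the computation is essentially forced once the orthogonality trick and the complete square method are in place. The main thing to get right is the bookkeeping of the Gauss-sum normalization and of the elementary quadratic-character identities ($\eta(c^{-1})=\eta(c)$, $\eta(4)=1$, $G_c(\eta,\chi)=\eta(c)G_1$, $G_1^2=\eta(-1)q$), together with keeping the three regimes $m=0$, $m\neq0$ with $\lambda=0$, and $\lambda\neq0$ cleanly separated throughout. The point worth emphasizing is structural rather than technical: the entire discrepancy between odd and even dimensions — and hence, downstream, the different Fourier decay of $d\sigma$ and the different sharp exponents for the extension and averaging operators — is caused by the single factor $\eta(t)^{d}$ in the final sum over $t$.
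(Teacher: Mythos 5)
Your proposal is correct and follows essentially the same route as the paper's proof: expand $(d\sigma)^\vee(m)=|S|^{-1}\sum_{x\in S}\chi(m\cdot x)$ via the orthogonality identity for $S$, split off the $t=0$ term, apply the complete square method (\ref{method}) coordinatewise, and evaluate the remaining sum $\sum_{t\neq 0}\eta(t)^d\chi(-\lambda/4t)$ according to the parity of $d$. The character bookkeeping ($\eta(4)=1$, $\eta(t)=\eta(t^{-1})$, $G_c=\eta(c)G_1$) matches the paper's, and your closing observation that the odd/even dichotomy stems entirely from the factor $\eta(t)^d$ is exactly the point the paper makes after the lemma.
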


\begin{proof}
Using the definition of the inverse Fourier transform and the orthogonality relations of the nontrivial additive character $\chi$ of ${\mathbb F_q}$, we see
\begin{align*}
(d\sigma)^\vee(m)&= |S|^{-1}\sum_{x\in S} \chi(x\cdot m)\\
&=|S|^{-1} q^{-1}\sum_{x\in {\mathbb F_q^d}}\sum_{s\in {\mathbb F_q}} \chi\left( s(a_1x_1^2+\cdots+ a_dx_d^2)\right)~\chi(x\cdot m)\\
&= q^{d-1}|S|^{-1} \delta_0(m)+ |S|^{-1} q^{-1}\sum_{x\in {\mathbb F_q^d}}\sum_{s\neq 0} \chi\left( s(a_1x_1^2+\cdots+ a_dx_d^2)\right)~\chi(x\cdot m)\\
&=q^{d-1}|S|^{-1} \delta_0(m)+ |S|^{-1}q^{-1} \sum_{s\neq 0} \prod_{j=1}^d \sum_{x_j\in {\mathbb F_q}} \chi(sa_j x_j^2+m_jx_j).
\end{align*}
Use the complete square method (\ref{method}), compute the sums over $x_j\in {\mathbb F_q}$ and then obtain that
$$ (d\sigma)^\vee(m)=q^{d-1}|S|^{-1}\delta_0(m)+ G_1^d |S|^{-1}q^{-1} \eta(a_1\cdots a_d) \sum_{s\neq 0} \eta^d(s) \chi\left( -\frac{1}{4s}\left(\frac{m_1^2}{a_1}+\cdots+\frac{m_d^2}{a_d}\right)\right).$$
{\bf Case I}. Suppose that $d\geq 3$ is odd. Then $ \eta^d\equiv \eta$, because $\eta$ is the multiplicative character of order two. Therefore, if 
$\frac{m_1^2}{a_1}+\cdots+\frac{m_d^2}{a_d}=0$, then the proof is complete, because  $\sum_{s\in {\mathbb F_q}\setminus \{0\}}\eta(s)=0.$ On the other hand, if $ \frac{m_1^2}{a_1}+\cdots+\frac{m_d^2}{a_d}\neq 0,$ then the statement follows from using  a change of variable,$-\frac{1}{4s}\left(\frac{m_1^2}{a_1}+\cdots+\frac{m_d^2}{a_d}\right) \to s,$  and the facts that $\eta(4)=1, \eta(s)=\eta(s^{-1})$ for $s\neq 0$, and $ G_1=\sum_{s\neq 0} \eta(s)\chi(s).$ \\
{\bf Case II}. Suppose that $ d\geq 2$ is even. Then $\eta^{d}\equiv 1.$ The proof is complete, because $\sum_{s\neq 0} \chi(as)=-1$ for all $a\neq 0$, and $\sum_{s\neq 0} \chi(as)=(q-1)$ if $a=0.$
\end{proof}

 Lemma \ref{explicit} yields the following corollary. 

\begin{corollary}\label{decay} If $d\geq 3$ is odd, then it follows that
\begin{equation}\label{odddecay}\begin{array}{ll}(d\sigma)^\vee(0,\dots,0)= 1, \quad & \\
|(d\sigma)^\vee(m)|\lesssim q^{-\frac{(d-1)}{2}}\quad &\mbox{if}~~ m\neq (0,\dots,0), \end{array}\end{equation}

and if $d\geq 4$ is even, then we have
\begin{equation}\label{evendecay}\begin{array}{ll} (d\sigma)^\vee(0,\dots,0)= 1, \quad &\\
|(d\sigma)^\vee(m)|\lesssim q^{-\frac{(d-2)}{2}} \quad &\mbox{if}~~m\neq (0,\dots,0). \end{array}\end{equation}
\end{corollary}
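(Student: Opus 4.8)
The plan is to read off all three assertions directly from Lemma \ref{explicit}, using only the standard facts $|G_1| = q^{1/2}$ and $|\eta(t)|\le 1$ for all $t\in{\mathbb F}_q$, together with the size estimate $|S|\sim q^{d-1}$ recorded above. First I would dispose of the value at the origin, which is common to both parities: by the definition of the inverse Fourier transform and the fact that $d\sigma$ has total mass one,
\[
(d\sigma)^\vee(0,\dots,0) = \frac{1}{|S|}\sum_{x\in S}\chi(0) = 1,
\]
which also serves as a consistency check on the first line of each case of Lemma \ref{explicit}.

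Next, for odd $d\ge 3$ and $m\ne(0,\dots,0)$, Lemma \ref{explicit} gives $(d\sigma)^\vee(m)=0$ whenever $m_1^2/a_1+\cdots+m_d^2/a_d=0$, so there is nothing to estimate in that case; otherwise $(d\sigma)^\vee(m) = \frac{G_1^{d+1}}{q|S|}\,\eta(-a_1\cdots a_d)\,\eta\!\left(\frac{m_1^2}{a_1}+\cdots+\frac{m_d^2}{a_d}\right)$. Since $|G_1^{d+1}| = q^{(d+1)/2}$ and the two quadratic-character factors have modulus at most one, one gets $|(d\sigma)^\vee(m)| \le q^{(d+1)/2}/(q|S|) = q^{(d-1)/2}/|S|$, and then $|S|\sim q^{d-1}$ yields $|(d\sigma)^\vee(m)|\lesssim q^{-(d-1)/2}$, which is (\ref{odddecay}).

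Finally, for even $d\ge 2$ and $m\ne(0,\dots,0)$, Lemma \ref{explicit} says the value is $\frac{G_1^d}{|S|}(1-q^{-1})\eta(a_1\cdots a_d)$ when $m_1^2/a_1+\cdots+m_d^2/a_d=0$ and $-\frac{G_1^d}{q|S|}\eta(a_1\cdots a_d)$ otherwise. Using $|G_1^d|=q^{d/2}$, $|1-q^{-1}|\le 1$, and $|S|\sim q^{d-1}$, the first expression is bounded by $q^{d/2}/|S|\lesssim q^{-(d-2)/2}$ and the second by $q^{d/2}/(q|S|)=q^{(d-2)/2}/|S|\lesssim q^{-d/2}\le q^{-(d-2)/2}$; in either case $|(d\sigma)^\vee(m)|\lesssim q^{-(d-2)/2}$, which is (\ref{evendecay}). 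There is no genuine obstacle here — the corollary is a direct bookkeeping consequence of the explicit formula — the only mildly delicate point being to remember that the value at the origin is pinned down independently by the total mass normalization of $d\sigma$ rather than by bounding the Gauss sums appearing in Lemma \ref{explicit}.
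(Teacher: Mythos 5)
Your proposal is correct and follows essentially the same route as the paper: the value at the origin comes from the total mass of $d\sigma$, and the off-origin bounds are read off from Lemma \ref{explicit} using $|G_1|=q^{1/2}$, $|\eta|\le 1$, and $|S|\sim q^{d-1}$. The only cosmetic difference is that the paper obtains $|S|\sim q^{d-1}$ by comparing the origin value $1$ with the formula in Lemma \ref{explicit}, whereas you take it as previously recorded; both are fine.
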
 
\begin{proof}
Recall that  the Fourier inverse transform of the surface measure $d\sigma$ is given by the relation
$$ (d\sigma)^\vee(m)=\int_{S} \chi(x\cdot m) d\sigma=\frac{1}{|S|}\sum_{x\in S} \chi(x\cdot m)$$
where $m\in ({\mathbb F_q^d},dm).$ Therefore, it is clear that $ (d\sigma)^\vee(0,\ldots,0)=1$ for all $d\geq 2.$
If we compare this with the values $(d\sigma)^\vee(0,\ldots,0)$ given by Lemma \ref{explicit}, then we see that 
$|S|\sim q^{d-1}$ for $d\geq 3.$ Since the absolute of the Gauss sum $G_1$ is exactly $q^{1/2}$, the statements in Corollary \ref{decay} follows immediately from Lemma \ref{explicit}.
\end{proof}

\section{Proof of Theorem \ref{oddresult} (Extension Theorems)}\label{five}
We begin by proving the last statement in Theorem \ref{oddresult}.
We choose  a variety $S$ with $a_j=1$ for $j$ odd and $a_j=-1$ otherwise. It follows that
if $d\geq 3$ is odd, then the variety $S$ contains the $(d-1)/2-$dimensional subspace
$$H=\left\{ (t_1,t_1, \dots, t_j,t_j, \dots, t_{(d-1)/2}, t_{(d-1)/2}, 0): t_k\in \mathbb F_q^d, k=1,2,\dots, (d-1)/2\right\},$$ 
and if $d\geq 4$ is even, then it contains the $d/2-$dimensional subspace
$$W=\left\{ (t_1,t_1, \dots, t_j,t_j, \dots, t_{d/2}, t_{d/2}): t_k\in \mathbb F_q^d, k=1,2,\dots, d/2\right\}.$$ 
Thus, the last statement in Theorem \ref{oddresult} follows immediately from the necessary conditions in Lemma \ref{Necone}.
 Next, observe that  the statements of (\ref{general1}) and (\ref{sharp22}) follow from Corollary \ref{decay} and the following lemma 
 which can be proved by a routine modification of the Euclidean Tomas-Stein type argument.

\begin{lemma}\label{Tao}
Let $d\sigma$ be the surface measure on the algebraic variety $S \subset ({\mathbb F_{q}^d},dx)$ defined as in (\ref{defofS}).
If $|(d\sigma)^\vee(m)|\lesssim q^{-\frac{\alpha}{2}}$ for some $\alpha>0$ and for all $m\in {\mathbb F_q^d}\setminus (0,\dots,0),$ then we have 
$$ R^*\left(2\to \frac{2(\alpha+2)}{\alpha}\right)\lesssim 1.$$
\end{lemma}
\begin{proof}
By duality, it suffices to prove that the following restriction estimate holds: for every function $g$ defined on $({\mathbb F_q^d},dm),$ we have
$$ \|\widehat{g}\|^2_{L^2(S,d\sigma)}\lesssim \|g\|^2_{L^{\frac{2(\alpha+2)}{\alpha+4}}({\mathbb F_q^d}, dm)}.$$
By the orthogonality principle and H\"{o}lder's inequality, we see 
$$ \|\widehat{g}\|^2_{L^2(S,d\sigma)} \leq \|g\ast (d\sigma)^\vee \|_{L^{\frac{2(\alpha +2)}{\alpha}}({\mathbb F_q^d}, dm)}\|g\|_{L^{\frac{2(\alpha +2)}{\alpha+4}}({\mathbb F_q^d}, dm)}.$$
It therefore suffices to show that for every function $g$ on $({\mathbb F_q^d},dm),$ 
$$\|g\ast (d\sigma)^\vee \|_{L^{\frac{2(\alpha +2)}{\alpha}}({\mathbb F_q^d}, dm)}\lesssim \|g\|_{L^{\frac{2(\alpha +2)}{\alpha+4}}({\mathbb F_q^d}, dm)}.$$
Define $ K=(d\sigma)^\vee -\delta_0.$ Since $(d\sigma)^\vee(0,\dots,0)=1$, we see that $ K(m)=0$ if $m=(0,\dots,0)$, and $K(m)=(d\sigma)^\vee(m)$ if $m\in {\mathbb F_q^d}\setminus \{(0,\dots,0)\}.$ 
It follows that
$$\begin{array}{ll} \|g\ast \delta_0 \|_{L^{\frac{2(\alpha +2)}{\alpha}}({\mathbb F_q^d}, dm)}&=\|g \|_{L^{\frac{2(\alpha +2)}{\alpha}}({\mathbb F_q^d}, dm)}\\
&\leq \|g\|_{L^{\frac{2(\alpha +2)}{\alpha+4}}({\mathbb F_q^d}, dm)},\end{array}$$
where the inequality follows from the fact that $dm$ is the counting measure and $\frac{2(\alpha +2)}{\alpha}\geq \frac{2(\alpha +2)}{\alpha+4}.$
Thus, it is enough to show that for every $g$ on $({\mathbb F_q^d},dm),$
\begin{equation}\label{kernel}
\|g\ast K \|_{L^{\frac{2(\alpha +2)}{\alpha}}({\mathbb F_q^d}, dm)}\lesssim \|g\|_{L^{\frac{2(\alpha +2)}{\alpha+4}}({\mathbb F_q^d}, dm)}.\end{equation}
We now claim that the following two estimates hold: for every function $g$ on $({\mathbb F_q^d},dm),$
\begin{equation}\label{ltwo} \|g\ast K\|_{L^2({\mathbb F_q^d}, dm)}\lesssim q \|g\|_{L^2({\mathbb F_q^d},dm)}
\end{equation}
and 
\begin{equation}\label{linfty}\|g\ast K\|_{L^\infty({\mathbb F_q^d}, dm)}\lesssim q^{-\frac{\alpha}{2}} \|g\|_{L^1({\mathbb F_q^d},dm)}.
\end{equation}
Note that the estimate (\ref{kernel}) follows by interpolating (\ref{ltwo}) and (\ref{linfty}). It therefore remains to show that both (\ref{ltwo}) and (\ref{linfty}) hold. Using Plancherel, the inequality (\ref{ltwo}) follows from the following observation:
$$ \begin{array}{ll} \|g\ast K\|_{L^2({\mathbb F_q^d}, dm)}&=\|\widehat{g}\widehat{K}\|_{L^2({\mathbb F_q^d}, dx)}\\
&\leq \|\widehat{K}\|_{L^\infty({\mathbb F_q^d}, dx)} \|\widehat{g}\|_{L^2({\mathbb F_q^d}, dx)}\\
&\lesssim q\|g\|_{L^2({\mathbb F_q^d}, dm)},\end{array}$$
where the last line is due to the observation that for each $x\in ({\mathbb F_q^d},dx)$\\
$\widehat{K}(x)= d\sigma(x)-\widehat{\delta_0}(x)=q^d|S|^{-1} S(x)-1 \lesssim q.$ On the other hand, the estimate (\ref{linfty}) follows from Young's inequality and the assumption on the Fourier decay estimates away from the origin. Thus, the proof is complete.
\end{proof}

\section{Proof of Theorem \ref{mainpaper} (Averaging Theorems)}

\subsection{Proof of (\ref{sharpodda}) in Theorem \ref{mainpaper}}
Because of the necessary condition (\ref{N4}) in Theorem \ref{neave}, it suffices to prove that if $(1/p,1/r)\in {\mathbb T}$, then $A(p\to r)\lesssim 1,$ where ${\mathbb T}$ is the convex hull of points $(0,0),(0,1),(1,1)$, and $(d/(d+1),1/(d+1)).$
Since both $d\sigma$ and $({\mathbb F_{q}^d},dx)$ have total mass $1$ it is clear that if $1\leq r \leq p\leq \infty,$ then
\begin{equation}\label{K1} \|f\ast d\sigma\|_{L^r({\mathbb F_{q}^d, dx})} \leq \|f\|_{L^p({\mathbb F_{q}^d, dx})}.\end{equation}
Using the interpolation theorem, it is enough to prove that 
$$ A\left((d+1)/d\to d+1\right)\lesssim 1.$$
Since the dimension $d\geq 3$ is odd, it follows from the first part of Corollary \ref{decay} that
$$|(d\sigma)^\vee(m)|\lesssim q^{-\frac{(d-1)}{2}}\quad \mbox{if}~~ m\neq (0,\dots,0),$$
and we complete the proof by using the lemma below due to the authors in \cite{CSW08}.
\begin{lemma}\label{Carbery}
Let $d\sigma$ be the surface measure on the algebraic variety $S \subset ({\mathbb F_{q}^d},dx)$ defined as in (\ref{defofSr}).
If $|(d\sigma)^\vee(m)|\lesssim q^{-\frac{\alpha}{2}}$ for all $m\in {\mathbb F_q^d}\setminus (0,\dots,0)$ and for some $\alpha>0,$ then we have 
$$ A\left(\frac{\alpha+2}{\alpha+1}\to \alpha+2\right)\lesssim 1.$$
\end{lemma}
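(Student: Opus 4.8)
The plan is to prove Lemma \ref{Carbery} by a real-interpolation argument on $({\mathbb F}_q^d,dx)$, completely parallel to the proof of Lemma \ref{Tao} but interpolating toward an $L^1$--$L^\infty$ endpoint instead of an $L^2$-based one. Writing the averaging operator as $Tf=f\ast d\sigma$, I would first peel off the zero-frequency part: since $S=-S$ the function $d\sigma$ is even, so $\widehat{d\sigma}(m)=(d\sigma)^\vee(m)$, which equals $1$ at $m=(0,\dots,0)$ (the total mass is one) and satisfies $|\widehat{d\sigma}(m)|\lesssim q^{-\alpha/2}$ for $m\neq(0,\dots,0)$ by hypothesis. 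Hence $Tf=T_0f+T_1f$, where $T_0f=\int f\,dx$ is the constant function and $T_1f=f\ast K$ with $K=d\sigma-1$, equivalently $\widehat K(m)=\widehat{d\sigma}(m)-\delta_0(m)$ (so $\widehat K$ vanishes at the origin and agrees with $(d\sigma)^\vee$ off it). The term $T_0$ maps $L^p(dx)\to L^r(dx)$ with norm $\le 1$ for all $1\le p,r\le\infty$, because $|\int f\,dx|\le\|f\|_{L^1(dx)}\le\|f\|_{L^p(dx)}$ and $\|1\|_{L^r(dx)}=1$; so $T_0$ is harmless and everything reduces to $T_1$.

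For $T_1$ I would establish the two endpoint bounds $\|T_1f\|_{L^2(dx)}\lesssim q^{-\alpha/2}\|f\|_{L^2(dx)}$ and $\|T_1f\|_{L^\infty(dx)}\lesssim q\,\|f\|_{L^1(dx)}$. The first is Plancherel (\ref{plancherel}) together with $\|\widehat K\|_{L^\infty(dm)}\lesssim q^{-\alpha/2}$, which is exactly the decay hypothesis on $(d\sigma)^\vee$ away from the origin (the vanishing of $\widehat K$ at $0$ is what makes this gain available). The second is Young's inequality on $({\mathbb F}_q^d,dx)$ combined with $\|K\|_{L^\infty(dx)}\le\|d\sigma\|_{L^\infty(dx)}+1=q^d|S|^{-1}+1\lesssim q$, using $|S|\sim q^{d-1}$ from Corollary \ref{decay}.

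Finally I would interpolate these two bounds at the parameter $\theta=\alpha/(\alpha+2)$: this gives $1/p=(1-\theta)/2+\theta=(\alpha+1)/(\alpha+2)$ and $1/r=(1-\theta)/2=1/(\alpha+2)$, i.e. $p=(\alpha+2)/(\alpha+1)$ and $r=\alpha+2$ as claimed, with operator norm $\lesssim(q^{-\alpha/2})^{1-\theta}q^{\theta}=q^{0}=1$. Adding back the trivial bound for $T_0$ finishes the proof. There is no serious obstacle here; the only points that need care are the bookkeeping---checking that $\theta=\alpha/(\alpha+2)$ lands exactly on the stated exponent pair and, more importantly, that the $q^{-\alpha/2}$ gain in the $L^2$ bound cancels the $q$ loss in the $L^\infty$ bound precisely at that point, so that the final estimate is genuinely uniform in $q$ (a clean $\lesssim 1$, not merely $\lessapprox 1$)---together with the observation that interpolating against the $L^1$--$L^\infty$ endpoint, rather than running a Stein--Tomas-type duality as in Lemma \ref{Tao}, is what is needed to reach the vertex $(d/(d+1),1/(d+1))$ of the necessary-conditions region from (\ref{N4}).
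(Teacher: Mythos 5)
Your proof is correct and is essentially the paper's own argument (which in turn follows Theorem 3 of \cite{CSW08}): the same splitting of $d\sigma$ into the constant part $1$ plus $\widehat{K}$ with $K=(d\sigma)^\vee-\delta_0$, the same two endpoint bounds $L^2\to L^2$ with gain $q^{-\alpha/2}$ via Plancherel and $L^1\to L^\infty$ with loss $q$ via Young, and the same interpolation at $\theta=\alpha/(\alpha+2)$ where the two powers of $q$ cancel exactly. The only difference is notational (you place $K$ on the physical side where the paper places it on the frequency side), so there is nothing to add.
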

\begin{proof} Consider a function $K$ on $({\mathbb F_q^d}, dm)$ defined as $K=(d\sigma)^\vee -\delta_0.$
We want to prove that for every function $f$ on $({\mathbb F_q^d},dx)$, 
$$\|f\ast d\sigma\|_{L^{\alpha+2}({\mathbb F_q^d},dx)}\lesssim \|f\|_{L^{\frac{\alpha+2}{\alpha+1}}({\mathbb F_q^d},dx)}.$$
Since $d\sigma= \widehat{K}+\widehat{\delta_0}= \widehat{K}+1$ and $\|f\ast 1\|_{L^{\alpha+2}({\mathbb F_q^d},dx)}\lesssim \|f\|_{L^{\frac{\alpha+2}{\alpha+1}}({\mathbb F_q^d},dx)}$, it suffices to show that for every $f$ on $({\mathbb F_q^d},dx),$
\begin{equation}
\|f\ast \widehat{K}\|_{L^{\alpha+2}({\mathbb F_q^d},dx)}\lesssim \|f\|_{L^{\frac{\alpha+2}{\alpha+1}}({\mathbb F_q^d},dx)}.
\end{equation}
Notice that this  can be done by interpolating the following two estimates:
\begin{equation}\label{first1}
\|f\ast \widehat{K}\|_{L^2({\mathbb F_q^d},dx)}\lesssim  q^{-\frac{\alpha}{2}}\|f\|_{L^2({\mathbb F_q^d},dx)}
\end{equation}
and 
\begin{equation}\label{second2}
\|f\ast \widehat{K}\|_{L^{\infty}({\mathbb F_q^d},dx)}\lesssim  q \|f\|_{L^1({\mathbb F_q^d},dx)}.
\end{equation}
The inequality (\ref{first1}) follows from  the Plancherel theorem, the size assumption of $|(d\sigma)^\vee|$, and the definition of $K.$
On the other hand, the inequality (\ref{second2}) follows from Young's inequality and the observation that $\|\widehat{K}\|_{L^\infty({\mathbb F_q^d},dx)}\lesssim q.$ Thus, the proof of Lemma \ref{Carbery} is complete. 
\end{proof}

\subsection{Proof of  Theorem \ref{mainpaper} in the case of even dimensions} 
First, observe that  the statement for the sharpness  follows from the necessary condition (\ref{N5}). 
Also recall from (\ref{K1}) that $A(p\to r)\lesssim 1$ for $1\leq q\leq p\leq \infty.$ 

It is clear by duality that the statement (\ref{sharpupto1}) implies the statement (\ref{sharpupto2}).
By the interpolation theorem, we also see that  the statements of (\ref{sharpupto1}) and (\ref{sharpupto2}) imply the statement (\ref{sharpupto}).
Therefore, it suffices to prove the restricted strong-type estimate (\ref{sharpupto1}).
More precisely,  it amounts to showing
\begin{equation}\label{resweak}\|E\ast d\sigma\|_{L^{d-1}({\mathbb F_q^d},dx)} \lesssim  \|E\|_{L^{\frac{d(d-1)}{d^2-2d+2}}({\mathbb F_q^d},dx)} \quad \mbox{for all}~~E \subset  \mathbb F_q^d.\end{equation}
We now consider the Bochner-Riesz kernel $K$ on $(\mathbb F_q^d,dm)$ defined by $K=(d\sigma)^\vee -\delta_0,$
where $\delta_0(m)=1$ if $m=(0,\dots,0),$ and $0$ otherwise.
Our task is to establish the following two inequalities: for all $E\subset \mathbb F_q^d,$
\begin{equation}\label{aa} \|E\ast \widehat{\delta_0}\|_{L^{d-1}({\mathbb F_q^d},dx)} \lesssim  \|E\|_{L^{\frac{d(d-1)}{d^2-2d+2}}({\mathbb F_q^d},dx)} \quad \mbox{for all}~~E \subset  \mathbb F_q^d
\end{equation}
and
\begin{equation}\label{bb}\|E\ast \widehat{K}\|_{L^{d-1}({\mathbb F_q^d},dx)} \lesssim  \|E\|_{L^{\frac{d(d-1)}{d^2-2d+2}}({\mathbb F_q^d},dx)} \quad \mbox{for all}~~E \subset  \mathbb F_q^d.
\end{equation}
Since $\widehat{\delta_0}=1$ and the total mass of $\mathbb F_q^d$ is one, the inequality (\ref{aa}) follows immediately from Young's inequality for convolution. On the other hand,  the inequality (\ref{bb}) can be obtained by interpolating the following two inequalities:
for all $E\subset \mathbb F_q^d,$
\begin{equation}\label{in1}
\|E\ast \widehat{K}\|_{L^\infty({\mathbb F_q^d}, dx)}\lesssim q \|E\|_{L^1(\mathbb F_q^d, dx)}
\end{equation}
and 
\begin{equation}\label{in2}
\|E\ast \widehat{K}\|_{L^2({\mathbb F_q^d}, dx)}\lesssim q^{\frac{-d+3}{2}} \|E\|_{L^\frac{2d}{d+2}(\mathbb F_q^d,dx)}.
\end{equation}
Since the inequality (\ref{in1})  follows immediately from Young's inequality and the observation that $\|\widehat{K}\|_{L^\infty(\mathbb F_q^d,dx)} \lesssim q,$ it remains to prove that (\ref{in2}) holds. Namely, we must show that 
$$\|E\ast \widehat{K}\|_{L^2({\mathbb F_q^d}, dx)}\lesssim q^{-d+\frac{1}{2}} |E|^{\frac{d+2}{2d}}\quad \mbox{for all}~~ E\subset \mathbb F_q^d.$$
It suffices to prove the following inequality which gives the better estimate in the case when $1\leq |E|\leq q^{\frac{d}{2}}:$ 

\begin{equation}\label{cool}\|E\ast \widehat{K}\|_{L^2({\mathbb F_q^d},dx)} \lesssim \left\{\begin{array}{ll} q^{-d+\frac{1}{2}} |E|^{\frac{d+2}{2d}}\quad &\mbox{if}~~ 1\leq |E|\leq q^{\frac{d}{2}}\\
q^{-d+1}|E|^{\frac{1}{2}} \quad &\mbox{if}~~ q^{\frac{d}{2}}\leq |E|\leq q^d. \end{array}\right. \end{equation}

Using the Plancherel theorem, we have
\begin{align*} \|E\ast \widehat{K}\|^2_{L^2({\mathbb F_q^d},dx)}&=\|\widehat{E} K\|^2_{L^2({\mathbb F_q^d},dm)}\\
&=\sum_{m\in {\mathbb F_q^d}} |\widehat{E}(m)|^2 |K(m)|^2=\sum_{m\neq (0,\dots,0)} |\widehat{E}(m)|^2 |(d\sigma)^\vee(m)|^2,
\end{align*}
where the last line follows from the definition of $K$ and the fact that $(d\sigma)^\vee(0,\dots,0)=1.$
Since  $|S|\sim q^{d-1}, |\eta|\equiv 1,$ and the absolute value of the Gauss sum $G_1$ is $q^{1/2},$  using the explicit formula for $(d\sigma)^\vee$ in the second part of Lemma \ref{explicit} shows that
$$\|E\ast \widehat{K}\|^2_{L^2({\mathbb F_q^d},dx)} $$
$$\sim \frac{1}{q^{d-2}} \sum_{\substack{m\neq (0,\dots,0):\\ \frac{m_1^2}{a_1}+\cdots+ \frac{m_d^2}{a_d}=0}} |\widehat{E}(m)|^2 +\frac{1}{q^d}\sum_{\substack{m\neq (0,\dots,0):\\ \frac{m_1^2}{a_1}+\cdots+ \frac{m_d^2}{a_d}\neq 0}} |\widehat{E}(m)|^2 =\mbox{I} +\mbox{II}.$$
From the Plancherel theorem  (\ref{plancherel}), we see that
$$\mbox{II}\leq \frac{1}{q^d} \sum_{m\in {\mathbb F_q^d}} |\widehat{E}(m)|^2 = q^{-2d}|E|.$$
We claim that the upper bound of $\mbox{I}$ is given by 
\begin{equation}\label{claim} \mbox{I}\lesssim  \min\left\{ q^{-2d+1}|E|^{\frac{d+2}{d}}, q^{-2d+2}|E|\right\},\end{equation}
which shall be proved  later.
It follows that \begin{align*} \|E\ast \widehat{K}\|^2_{L^2({\mathbb F_q^d},dx)} &\lesssim \min\left\{ q^{-2d+1}|E|^{\frac{d+2}{d}}, q^{-2d+2}|E|\right\} + q^{-2d}|E|\\
&\sim \min\left\{ q^{-2d+1}|E|^{\frac{d+2}{d}}, q^{-2d+2}|E|\right\}.\end{align*}
By a direct calculation, we see that this estimate implies (\ref{cool}). 
Thus, our last work is to prove the claim (\ref{claim}). Notice that (\ref{claim}) can be obtained by using the following lemma based on the dual extension theorem.
\begin{lemma}
For any subset $E$ of $({\mathbb F_{q}^d},dx)$  and $b_j\neq 0$ for $j=1,\dots, d,$ if $d\geq 4$ is even, then we have
$$ \sum_{m\in S} |\widehat{E}(m)|^2 := \sum_{m\in S} \left| q^{-d} \sum_{x\in E} \chi(-m\cdot x)\right|^2 \lesssim \min\left\{ q^{-(d+1)}|E|^{\frac{d+2}{d}} ,~ q^{-d}|E| \right\},$$
where $S=\{ m\in {\mathbb F_q^d}: b_1m_1^2+\cdots+ b_dm_d^2=0\}\subset ({\mathbb F_q^d},dm)$
\end{lemma}
\begin{proof} It is clear from the Plancherel theorem  that 
$$\sum_{m\in S} |\widehat{E}(m)|^2 \leq \sum_{m\in {\mathbb F_q^d}} |\widehat{E}(m)|^2 = q^{-d}|E|.$$ 
It therefore remains to show that
\begin{equation}\label{finalcor} \sum_{m\in S} |\widehat{E}(m)|^2 := \sum_{m\in S} \left| q^{-d} \sum_{x\in E} \chi(-m\cdot x)\right|^2  \lesssim q^{-(d+1)}|E|^{\frac{d+2}{d}}.\end{equation}
Since the space $({\mathbb F_q^d},dx)$ is isomorphic to its dual space $({\mathbb F_{q}^d},dm)$ as an abstract group, we may identify  the space $({\mathbb F_q^d},dx)$ with the dual space $({\mathbb F_{q}^d},dm).$ Thus, they possess same algebraic structures. Recall that we have endowed them with different measures: the counting measure $dm$ for $ ({\mathbb F_q^d},dm)$ and the normalized counting measure $dx$ for $({\mathbb F_{q}^d},dx).$ For these reasons, the inequality (\ref{finalcor}) is essentially same as the following: for every subset $E$ of $({\mathbb F_{q}^d},dm)$
\begin{equation}\label{aimcor}\sum_{x\in S} q^{-2d}|\widehat{E}(x)|^2\lesssim q^{-(d+1)}|E|^{\frac{d+2}{d}},\end{equation}
where $S$ is considered as  
$$S=\{ x\in {\mathbb F_{q}^d}: b_1x_1^2+\cdots+ b_dx_d^2=0\}\subset ({\mathbb F_{q}^d},dx)\quad\mbox{and}~~ \widehat{E}(x)=\sum_{m\in \mathbb F_q^d} \chi(-m\cdot x) E(m). $$
By duality  (\ref{restrictiondef}), the statement (\ref{sharp22}) in Theorem \ref{oddresult} implies that the following restriction estimate holds: for every function $g$ on $({\mathbb F_q^d},dm),$ 
$$\|\widehat{g}\|^2_{L^{2}(S,d\sigma)}\lesssim \|g\|^2_{L^{\frac{2d}{d+2}}({\mathbb F_q^d}, dm)}.$$ If we take $g(m)=E(m)$, then we have 
$$ \frac{1}{|S|}\sum_{x\in S} |\widehat{E}(x)|^2 \lesssim |E|^{\frac{d+2}{d}}$$
Since $|S|\sim q^{d-1},$    (\ref{aimcor}) holds and the proof is complete.
\end{proof}

{\bf Acknowledgement :} The authors would like to thank the referee for his/her valuable comments for developing the final version of this paper.

\end{document}